\newtheorem{theorem}{Theorem}
\newtheorem{conj}{Conjecture}
\newtheorem{lemma}{Lemma}
\begin{document}

\title{A proof of a Frankl-Kupavskii conjecture on intersecting families}
\author{Agnijo Banerjee\thanks{ab2558@cam.ac.uk, Department of Pure Mathematics and Mathematical Statistics (DPMMS), University of Cambridge, Wilberforce Road, Cambridge, CB3 0WA, United Kingdom}}
\date{9 May 2023}

\maketitle

\begin{abstract}

\noindent A family $\mathcal{F} \subset \mathcal{P}(n)$ is \textit{$r$-wise $k$-intersecting} if $|A_1 \cap \dots \cap A_r| \geq k$ for any $A_1, \dots, A_r \in \mathcal{F}$. It is easily seen that if $\mathcal{F}$ is $r$-wise $k$-intersecting for $r \geq 2$, $k \geq 1$ then $|\mathcal{F}| \leq 2^{n-1}$. The problem of determining the maximal size of a family $\mathcal{F}$ that is both $r_1$-wise $k_1$-intersecting and $r_2$-wise $k_2$-intersecting was raised in 2019 by Frankl and Kupavskii \cite{frankl-kupavskii}. They proved the surprising result that, for $(r_1,k_1) = (3,1)$ and $(r_2,k_2) = (2,32)$ then this maximum is at most $2^{n-2}$, and conjectured the same holds if $k_2$ is replaced by $3$. In this paper we shall not only prove this conjecture but we shall also determine the exact maximum for $(r_1,k_1) = (3,1)$ and $(r_2,k_2) = (2,3)$ for all $n$.

\end{abstract}

\newpage

\section{Introduction}

\noindent We say that a family $\mathcal{F} \subset \mathcal{P}(n)$ is $r$-wise $k$-intersecting if any $r$ sets in $\mathcal{F}$ have common intersection of size at least $k$. (If $r$ is omitted it is assumed to be $2$, and if $k$ is omitted it is assumed to be $1$.) We define the collections of families $\mathcal{A}_k(n)$ and $\mathcal{B}_k(n)$ to consist of all $3$-wise $k$-intersecting families in $\mathcal{P}(n)$ and all $k$-intersecting families in $\mathcal{P}(n)$ respectively. We also define, for a family $\mathcal{F} \subset \mathcal{P}(n)$, the function $w(\mathcal{F}) = \frac{|\mathcal{F}|}{2^n}$.

\bigskip

\noindent Frankl and Kupavskii \cite{frankl-kupavskii} started the investigation of the function $W_{k_1,k_2}(n) = \text{max}\{w(\mathcal{F}) : \mathcal{F} \in \mathcal{A}_{k_1}(n) \cap \mathcal{B}_{k_2}(n)\}$. This is the maximum proportion of $\mathcal{P}(n)$ that can be occupied by a family that is both $3$-wise $k_1$-intersecting and ($2$-wise) $k_2$-intersecting. Equivalently, $W_{k_1,k_2}(n) = \text{max}\{w(\mathcal{F} \cap \mathcal{G}) : \mathcal{F} \in \mathcal{A}_{k_1}(n), \mathcal{G} \in \mathcal{B}_{k_2}(n)\}$. We shall concentrate on the most important case, $k_1=1, k_2=3$. Frankl and Kupavskii conjectured the following:

\begin{conj}
\label{conj:frankl-kupavskii}
If $\mathcal{F} \subset \mathcal{P}(n)$ is both $3$-wise intersecting and $3$-intersecting, then $|\mathcal{F}| \leq 2^{n-2}$. Equivalently, $W_{1,3}(n) \leq \frac{1}{4}$ for all $n$.
\end{conj}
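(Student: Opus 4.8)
The plan is to normalise $\mathcal{F}$, dispose of a few easy configurations, and then fight in the case where $\mathcal{F}$ has no common element.

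\textbf{Reductions.} Since $A_1\subseteq A_1',\dots ,A_r\subseteq A_r'$ implies $\bigcap_i A_i'\supseteq\bigcap_i A_i$, passing to the up-closure $\mathcal{F}^{\uparrow}$ preserves both ``$3$-wise intersecting'' and ``$3$-intersecting'' and does not decrease $|\mathcal{F}|$, so I would assume $\mathcal{F}$ is an up-set. The two hypotheses are ``$3$-wise $1$-intersecting'' and ``$2$-wise $3$-intersecting'' --- in each, the parameter that makes shifting delicate is at its harmless value --- so all left-compressions preserve them and I would further assume $\mathcal{F}$ is a shifted up-set. Two configurations are then immediate: if some $A_0\in\mathcal{F}$ has $|A_0|=3$ then (by $3$-intersecting) every member of $\mathcal{F}$ contains $A_0$, forcing $|\mathcal{F}|\le 2^{n-3}$; and if $|A_0|=4$ then $\mathcal{F}\subseteq\{B:|B\cap A_0|\ge 3\}$, and a short cross-intersecting estimate on the four traces $\{B:B\cap A_0=S\}$ with $|S|=3$ (which must be pairwise cross-intersecting, since distinct $3$-subsets of $A_0$ meet in only $2$ points) gives $|\mathcal{F}|<2^{n-2}$. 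So I may assume every member of $\mathcal{F}$ has at least $5$ elements.

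\textbf{The dictatorial case.} Suppose $\bigcap_{A\in\mathcal{F}}A\neq\emptyset$; for a shifted family this means $1\in A$ for every $A\in\mathcal{F}$. Then $\mathcal{F}-1=\{A\setminus\{1\}:A\in\mathcal{F}\}$ lives on an $(n-1)$-element ground set, and from $|A\cap B|\ge 3$ with $1\in A\cap B$ it is $2$-intersecting, hence intersecting, hence $|\mathcal{F}|=|\mathcal{F}-1|\le 2^{n-2}$. In fact $|\mathcal{F}|$ is then bounded by the maximum size $M_2(n-1)$ of a $2$-intersecting family in $\mathcal{P}(n-1)$, and the ``common element together with an extremal $2$-intersecting family'' construction attains it; this pins the exact value of $W_{1,3}(n)$ once the remaining case is handled.

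\textbf{The non-dictatorial case.} Now $\bigcap_{A\in\mathcal{F}}A=\emptyset$, and I would fix a minimum transversal $T$ of $\mathcal{F}$ (recording $|T|\le\min_{A,B\in\mathcal{F}}|A\cap B|$, since if $A\cap B$ is minimal then every $C$ meets it, by $3$-wise intersecting). Partition $\mathcal{F}=\bigsqcup_{\emptyset\neq R\subseteq T}\mathcal{F}_R$ by trace on $T$ and set $\mathcal{F}_R^{*}=\{A\setminus T:A\in\mathcal{F}_R\}\subseteq\mathcal{P}([n]\setminus T)$, so $|\mathcal{F}|=\sum_R|\mathcal{F}_R^{*}|$ with each $|\mathcal{F}_R^{*}|\le 2^{n-|T|}$. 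The hypotheses become relations among the $\mathcal{F}_R^{*}$: $3$-intersecting forces $\mathcal{F}_R^{*},\mathcal{F}_{R'}^{*}$ to be cross-$(3-|R\cap R'|)$-intersecting whenever $|R\cap R'|\le 2$ (and $\mathcal{F}_R^{*}$ itself $(3-|R|)$-intersecting when $|R|\le 2$), while $3$-wise intersecting forces $\mathcal{F}_R^{*},\mathcal{F}_{R'}^{*},\mathcal{F}_{R''}^{*}$ to be triple-cross-intersecting whenever $R\cap R'\cap R''=\emptyset$. Using the elementary fact that cross-intersecting $\mathcal{A},\mathcal{B}\subseteq\mathcal{P}(m)$ satisfy $|\mathcal{A}|+|\mathcal{B}|\le 2^{m}$, one can pair up traces with $|R\cap R'|\le 2$; but this matching bound only reaches about $2^{n-1}$. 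The substance of the theorem is that the \emph{stronger} cross-$2$- and cross-$3$-intersecting relations, together with the triple conditions from $3$-wise intersecting, push the total down to $2^{n-2}$. I would do this by a case analysis organised by $|T|$ and by the sizes $|R|$ of the occurring traces, using repeatedly that $3$-wise intersecting rules out the ``all traces through a common point'' configuration that is the only way to approach $2^{n-1}$, and finally comparing the resulting estimate with $M_2(n-1)$ to complete the exact determination.

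\textbf{Main obstacle.} The hard part is precisely this last optimisation, and it is where the gap between Frankl--Kupavskii's ``$32$'' and the conjectural ``$3$'' lives: one must use the pairwise $3$-intersecting and the $3$-wise intersecting information simultaneously and tightly, because neither alone --- nor the crude cross-intersecting consequences --- gets below $\tfrac14\cdot 2^{n}$.
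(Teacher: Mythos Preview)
Your reductions and the dictatorial case are fine, but the proof is incomplete: the non-dictatorial case is not actually carried out. You set up the trace partition on a minimum transversal $T$, record the cross-$(3-|R\cap R'|)$-intersecting and triple-cross-intersecting relations, and then say ``I would do this by a case analysis'' --- but you yourself observe that the pairing bound only reaches $2^{n-1}$, and you end by naming the remaining step as the \emph{main obstacle}. That obstacle is the whole content of the theorem. Note also that the extremal families $\mathcal{F}_n$ are \emph{not} dictatorial (they contain sets of size $\ge n-2$ missing $1$), so the non-dictatorial branch cannot be vacuous and must be confronted.

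The paper avoids the direct trace-partition optimisation entirely by a different mechanism: induction on $n$ through the generating antichain $\mathcal{G}$ of minimal elements of the shifted up-set $\mathcal{F}$. Split $\mathcal{G}_0=\{A\in\mathcal{G}:n\in A\}$ and try to empty it by either \emph{shortening} $A\mapsto A\setminus\{n\}$ (gaining weight $2^{-n}$) or \emph{removing} $A$ (losing $2^{-n}$). Shortening can fail only on a ``sharp pair'' ($A,B\in\mathcal{G}_0$ with $A\cup B=[n]$, $|A\cap B|=3$) or a ``sharp triple''. The decisive dichotomy is whether some sharp pair has intersection $\{i,n-1,n\}$. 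If not, one partitions $\mathcal{G}_0$ by membership of $n-1$, shortens the larger part and removes the smaller, reducing to $\mathcal{P}(n-1)$ with $w$ non-decreasing; if so, a short compression argument forces every $C\in\mathcal{F}$ with $|C|\le n-3$ to contain $1$, i.e.\ $\mathcal{F}$ is \emph{almost-trivial}, and then Katona's $2$-intersecting bound on $[2,n]$ finishes. This inductive ``reduce-or-almost-trivial'' dichotomy is the idea your outline is missing; the trace-partition framework, as you note, does not by itself cross the gap from $2^{n-1}$ down to $2^{n-2}$.
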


\bigskip

\noindent If $\mathcal{F}$ is $r$-wise $k$-intersecting then so is the up-set generated by $\mathcal{F}$. Thus, we may assume throughout this paper that $\mathcal{F}$ is an up-set. Also, applying left-compressions to $\mathcal{F}$ preserves the property of being $r$-wise $k$-intersecting so we may also assume that, except where otherwise stated, $\mathcal{F}$ is left-compressed.

\bigskip

\noindent Also, for a family $\mathcal{F} \subset \mathcal{P}(n)$, and for $k \geq 1$, we denote by $\mathcal{F} \times \{0,1\}^k$ the family $\{A \subset \mathcal{P}(n+k) : A \cap [n] \in \mathcal{F} \}$. (This arises from considering subsets of $[n]$ as binary sequences of length n). We observe that $w(\mathcal{F} \times \{0,1\}^k) = w(\mathcal{F})$ and thus that $W_{k_1,k_2}(n)$ is non-decreasing for all $k_1$ and $k_2$.

\bigskip

\noindent We note that $\text{max}\{w(\mathcal{F}) : \mathcal{F} \in \mathcal{A}_1(n) \} = \frac{1}{2}$ (achieved by, for instance, letting $\mathcal{F}$ consist of all subsets of $[n]$ containing $1$), and for every fixed $k_2 \geq 3$ we have $\text{max}\{w(\mathcal{F}) : \mathcal{F} \in \mathcal{B}_{k_2}(n) \}$ tends to $\frac{1}{2}$ as $n \to \infty$. The Harris-Kleitman inequality thus tells us that

$$\liminf_{n \to \infty} W_{1,k_2}(n) \geq \frac{1}{4}$$

\bigskip

\noindent In view of this trivial inequality, the result of Frankl and Kupavskii that $W_{1,32}(n) \leq \frac{1}{4}$ for all $n$ is surprising. In this paper we shall prove considerably more, namely that $W_{1,3}(n) \leq \frac{1}{4}$ for all $n$. Moreover, we shall determine the exact value of $W_{1,3}(n)$ for all $n$, and find the unique left-compressed families $\mathcal{F}$ with $w(\mathcal{F})=W_{1,3}(n)$. We write $W(n)$ for $W_{1,3}(n)$.

\bigskip

\noindent For small values of $n$, the families of maximal size can be found by simply taking the maximal $3$-intersecting families (as in \cite{katona}), since these will also be $3$-wise intersecting. For $n=3$, this is $\{123\}$ and $W(3) = \frac{1}{8}$. Likewise, for $n=4$, we get $\{123,1234\}$ and $W(4) = \frac{1}{8}$. For $n=5$, we can take all sets of size at least $4$ and $W(5) = \frac{3}{16}$. Likewise, for $n=6$, we can take all sets whose intersection with $[5]$ has size at least $4$ and $W(6) = \frac{3}{16}$. Our main result will thus consider $n \geq 7$.

\bigskip

\noindent For odd $n \geq 7$, we define the following family:

$$\mathcal{F}_n=\{A \subset [n] : 1 \in A, |A| \geq \frac{n+3}{2} \} \cup \{A \subset [n] : 1 \not\in A, |A| \geq n-2 \}.$$

\noindent We have that $\mathcal{F}_n$ is $3$-intersecting and $3$-wise intersecting. We also have that

$$w(\mathcal{F}_n) = \frac{1}{4} + 2^{-n} \left( - \frac{1}{2} \binom{n-1}{\frac{n-1}{2}} + n \right).$$

\noindent We note that $\lim\limits_{n \to \infty} w(\mathcal{F}_n) = \frac{1}{4}$, and $w(\mathcal{F}_n) \leq \frac{1}{4}$ for all $n \geq 7$. Also, for $n \geq 11$, we have $w(\mathcal{F}_{n+2}) > w(\mathcal{F}_n)$. However, we have $w(\mathcal{F}_7) > w(\mathcal{F}_9) > w(\mathcal{F}_{11})$, and indeed $w(\mathcal{F}_7) > w(\mathcal{F}_n)$ for odd $n$, $9 \leq n \leq 71$. We shall prove the following theorem.

\begin{theorem}
\label{theorem:Main_Thm}
For $n \geq 7$, the following hold.

If $7 \leq n \leq 72$, $W(n) = w(\mathcal{F}_7)$, and the unique left-compressed family $\mathcal{F}$ attaining $w(\mathcal{F}) = W(n)$ is $\mathcal{F} = \mathcal{F}_7 \times \{0,1\}^{n-7}$.

If $n \geq 73$ is odd, $W(n) = w(\mathcal{F}_n)$, and the unique left-compressed family $\mathcal{F}$ attaining $w(\mathcal{F}) = W(n)$ is $\mathcal{F} = \mathcal{F}_n$

If $n \geq 74$ is even, $W(n) = w(\mathcal{F}_{n-1})$, and the unique left-compressed family $\mathcal{F}$ attaining $w(\mathcal{F}) = W(n)$ is $\mathcal{F} = \mathcal{F}_{n-1} \times \{0,1\}$.
\end{theorem}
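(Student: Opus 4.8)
The plan is to induct on $n$, working throughout with left‑compressed up‑sets (as justified above), using the link decomposition at the last coordinate as the main engine, supplemented by a short list of explicitly analysed rigid configurations and a finite verification of the base cases up to $n=72$. Let $\mathcal{F}\subseteq\mathcal{P}(n)$ be a left‑compressed up‑set that is $3$‑wise intersecting and $3$‑intersecting. Put $\mathcal{F}_0=\{A\in\mathcal{F}:n\notin A\}$ and $\mathcal{F}_1=\{A\setminus\{n\}:n\in A\in\mathcal{F}\}$, both up‑sets in $\mathcal{P}(n-1)$; since $\mathcal{F}$ is an up‑set, $\mathcal{F}_0\subseteq\mathcal{F}_1$ and $w(\mathcal{F})=\tfrac12\big(w(\mathcal{F}_0)+w(\mathcal{F}_1)\big)$ (measures now taken in $\mathcal{P}(n-1)$). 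Since $\mathcal{F}_0$ is again $3$‑wise intersecting and $3$‑intersecting, $w(\mathcal{F}_0)\le W(n-1)$. The difficulty is that $\mathcal{F}_1$ is only forced to be $2$‑intersecting (this alone follows from $\mathcal{F}$ being $3$‑intersecting), and a $2$‑intersecting up‑set can have measure close to $\tfrac12$, so the naive recursion only yields $w(\mathcal{F})\le\tfrac38+o(1)$. The extra leverage comes from \emph{coupling conditions}: because $\mathcal{F}$ is $3$‑wise intersecting, $A\cap B\cap C\ne\emptyset$ whenever $A\in\mathcal{F}_0$, $B,C\in\mathcal{F}_1$, and $A\cap A'\cap B\ne\emptyset$ whenever $A,A'\in\mathcal{F}_0$, $B\in\mathcal{F}_1$; and left‑compressedness in the pair $(1,n)$ forces $B\cup\{1\}\in\mathcal{F}_0$ for every $B\in\mathcal{F}_1$ with $1\notin B$.

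\textbf{The core trade‑off.} The heart of the argument is a quantitative estimate of the shape $w(\mathcal{F}_1)\le\psi_{n-1}\!\big(w(\mathcal{F}_0)\big)$ for a decreasing function $\psi_{n-1}$: the larger $\mathcal{F}_0$ is, the more members of $\mathcal{F}_0$ a fixed $B\in\mathcal{F}_1$ must meet three‑wise, and it is exactly this that keeps $w(\mathcal{F}_1)$ bounded away from $\tfrac12$. One then needs $\tfrac12\big(x+\psi_{n-1}(x)\big)\le w(\mathcal{F}_n)$ for all $x\le W(n-1)$ (and $\le w(\mathcal{F}_7)$ in the range where the latter is larger), with equality pinned to the configuration realising $\mathcal{F}_n$. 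In practice one may have to look two coordinates deep, obtaining a recursion for $W(n)$ in terms of $W(n-2)$, in order to get constants that are sharp enough. I expect this to be the main obstacle: every other piece of the proof depends on establishing this trade‑off with constants strong enough both to beat $\tfrac14$ rather than $\tfrac38$ and to remain valid all the way down to $n=7$.

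\textbf{Rigid configurations and the dictator case.} Alongside the recursion I would dispose of the very structured regimes directly. Let $d=\min_{A\in\mathcal{F}}|A|$, so $d\ge3$ since $\mathcal{F}$ is $3$‑intersecting. For small $d$ the constraints pin $\mathcal{F}$ to a short explicit list: $d=3$ forces $\{1,2,3\}\subseteq A$ for all $A\in\mathcal{F}$, so $w(\mathcal{F})=\tfrac18$; $d=4$ leaves only finitely many left‑compressed possibilities; and so on, each checked to have measure at most $w(\mathcal{F}_n)$ by direct computation. The extremal families have $d=\tfrac{n+3}{2}$, an intermediate value, so it is the mid‑range of $d$ that the recursion must handle. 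The case $1\in\bigcap\mathcal{F}$ — which, by left‑compressedness, means every member of $\mathcal{F}$ contains $1$ — is separate: then $\mathcal{G}=\{A\setminus\{1\}:A\in\mathcal{F}\}$ is a $2$‑intersecting up‑set in $\mathcal{P}(n-1)$ and $w(\mathcal{F})=\tfrac12 w(\mathcal{G})$, which one bounds by the maximum measure of a $2$‑intersecting up‑set; this comes out strictly below $w(\mathcal{F}_n)$, reflecting that $\mathcal{F}_n$ has no common element and exceeds the pure dictator by exactly $n2^{-n}$.

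\textbf{Base cases and uniqueness.} Pushing the case analysis down to $n=7$ is a finite check; there $\mathcal{F}_7=\{A\subseteq[7]:|A|\ge5\}$ and $w(\mathcal{F}_7)=\tfrac{29}{128}$. One then climbs back up via $W(n)\le\tfrac12\big(W(n-1)+\psi_{n-1}(W(n-1))\big)$, at each step comparing the candidate maxima $w(\mathcal{F}_7)$ and $w(\mathcal{F}_n)$ (respectively $w(\mathcal{F}_{n-1})$ for even $n$): the non‑monotonicity recorded in the introduction, namely $w(\mathcal{F}_7)>w(\mathcal{F}_9)>\cdots$ together with $w(\mathcal{F}_7)>w(\mathcal{F}_n)$ for $9\le n\le71$, is precisely what forces the cut‑off at $n=72$ and the switchover to $\mathcal{F}_n$ (or $\mathcal{F}_{n-1}\times\{0,1\}$) beyond it. Uniqueness follows by tracking the equality cases of the core trade‑off and of the base‑case analysis through the induction: a left‑compressed extremiser can differ from $\mathcal{F}_0$ only inside $\mathcal{F}_1\setminus\mathcal{F}_0$, and the coupling conditions force that difference to consist exactly of the large sets through $1$ together with the $n$ nearly‑full sets avoiding $1$, i.e. $\mathcal{F}=\mathcal{F}_n$, or $\mathcal{F}_{n-1}\times\{0,1\}$, or $\mathcal{F}_7\times\{0,1\}^{n-7}$, according to the range of $n$.
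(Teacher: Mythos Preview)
Your proposal has a genuine gap, and you have already named it yourself: the ``core trade-off'' $w(\mathcal{F}_1)\le\psi_{n-1}\!\big(w(\mathcal{F}_0)\big)$ is never actually established. Everything in your outline hangs on this inequality being true with sharp enough constants, yet you supply no candidate for $\psi_{n-1}$, no mechanism for proving it, and you explicitly flag it as the main obstacle. Without it, the link decomposition only gives $w(\mathcal{F})=\tfrac12\big(w(\mathcal{F}_0)+w(\mathcal{F}_1)\big)\le\tfrac12\big(W(n-1)+\tfrac12\big)$, which is far too weak. The coupling conditions you list (three-wise intersection constraints between $\mathcal{F}_0$ and $\mathcal{F}_1$, and the compression condition $B\cup\{1\}\in\mathcal{F}_0$) are correct observations, but turning them into a quantitative bound of the right strength is precisely the content of the theorem, not a routine step. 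The side analyses (dictator case, small $d$) are fine but peripheral; they do not touch the regime $d\approx n/2$ where the extremal families live and where your recursion would have to do all the work.

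The paper bypasses the need for any such trade-off inequality entirely, via a clean dichotomy at the level of the \emph{generating set} $\mathcal{G}$ of minimal elements rather than the link families. One tries to ``shorten'' each $A\in\mathcal{G}$ with $n\in A$ to $A\setminus\{n\}$; the only obstructions are rigid configurations (``sharp pairs'' $A,B$ with $|A\cap B|=3$, $A\cup B=[n]$, and ``sharp triples''). If no sharp pair has $n-1$ in the common part, one partitions the generators containing $n$ by whether they contain $n-1$, shortens the larger side and deletes the smaller, obtaining a family in $\mathcal{P}(n-1)$ with $w$ at least as large; this gives $w(\mathcal{F})\le W(n-1)$ directly. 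If some sharp pair does have $n-1$ in the common part, a short compression argument forces $\mathcal{F}$ to be almost-trivial (every set of size $\le n-3$ contains $1$), and then Katona's $2$-intersection theorem applied to $\{A\setminus\{1\}:1\in A\in\mathcal{F}\}$ finishes the bound. This dichotomy---either reduce to $n-1$ with no loss, or the family is essentially a dictator plus a handful of huge sets---is the missing idea your approach lacks.
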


\noindent Since $w(\mathcal{F}_n) \leq \frac{1}{4}$ for all $n \geq 7$, Theorem \ref{theorem:Main_Thm} completes the proof of Conjecture \ref{conj:frankl-kupavskii}.

\section{Determining the values of $W(n)$}

\bigskip

\noindent The proof of Theorem \ref{theorem:Main_Thm} is loosely inspired by the proof of the Ahlswede-Khachatrian Theorem \cite{ahlswede-khachatrian}. We assume that $\mathcal{F}$ is a left-compressed up-set. We also say that a family of subsets of $[n]$ is trivial if every element contains $1$ and almost-trivial if every element of size $\leq n-3$ contains $1$. (We use triviality in the sense of \cite{frankl-kupavskii}.) The main idea is to consider the generating set of minimal elements of the up-set $\mathcal{F}$, and attempt to transform this into a generating set for a family of subsets of $[n-1]$, without reducing the value of $w$. Lemmas \ref{Exists-n-1-pair} and \ref{Almost-Trivial} will show that this is always possible unless $\mathcal{F}$ is almost-trivial, and Lemma \ref{Max-size-for-almost-trivial} will establish the upper bound for $w(\mathcal{F})$ if it is almost-trivial.

\bigskip

\noindent For $A \neq B \subset [n]$ and $|A|=|B|=r$, we write $A \prec B$ if, for all $1 \leq i \leq |A|$, the $i$th element of $A$ is at most the $i$th element of $B$. This is equivalent to saying that $A$ can be obtained from $B$ by left-compressions so any left-compressed family containing $B$ must contain $A$.

\bigskip

\noindent For any up-set $\mathcal{F}$, we let its generating set $\mathcal{G} = \mathcal{G}(\mathcal{F})$ be the family of all minimal elements of $\mathcal{F}$, and we say that $\mathcal{G}$ generates $\mathcal{F}$. We have that $\mathcal{G}$ is an antichain and $\mathcal{F} = \{A \subset [n] : \exists B \in \mathcal{G}, B \subset A\}$. Also, for $\mathcal{F}$ left-compressed, $\mathcal{G}$ has the property that if $A \prec B$ and $B \in \mathcal{G}$ then $\exists C \in \mathcal{G}$ with $C \subset A$ (as $A \prec B$ implies $A \in \mathcal{F}$). If a generating set $\mathcal{G}$ has this property, we say it is a left-compressed generating set (and in fact it then generates a left-compressed family).

\bigskip

\noindent For $\mathcal{G}$ a left-compressed generating set for $\mathcal{F}$ and $E \in \mathcal{G}$, we define

$$\mathcal{D}(E) = \{A \subset [n] : A \cap [\max(E)] = E\}.$$

\noindent For distinct $E$, the families $\mathcal{D}(E)$ must be disjoint, since if $A \in \mathcal{D}(E) \cap \mathcal{D}(E')$ for $E \neq E' \in \mathcal{G}$, we may assume w.l.o.g. that $\max(E) \leq \max(E')$ in which case we get $E \subset E'$, contradicting the fact that $\mathcal{G}$ is an antichain. Also, since every element of $\mathcal{D}(E)$ is a superset of $E$, we have that $\mathcal{D}(E) \subset \mathcal{F}$ for all $E \in \mathcal{G}$.

\bigskip

\noindent For $A \in \mathcal{F}$, let $E \in \mathcal{G}$ such that $E \subset A$, with $|E|$ minimal. Let $E'$ consist of the first $|E|$ elements of $A$. Then $E' \prec E$ and $E' \subset A$. If $E' \not\in \mathcal{G}$ then there a set $C \in \mathcal{G}$ with $C \subset E'$ and $|C| < |E'| = |E|$. Since $C \subset E'$, we have that $C \subset A$, contradicting minimality of $|E|$. Thus $E' \in \mathcal{G}$ and in fact $A \in \mathcal{D}(E')$.

\bigskip

\noindent We have that the families $\mathcal{D}(E)$ are all subfamilies of $\mathcal{F}$ and are disjoint for distinct $E$, and that every set in $\mathcal{F}$ is in $\mathcal{D}(E)$ for some $E \in \mathcal{G}$. Thus $\mathcal{F}$ is the disjoint union of the $\mathcal{D}(E)$ for $E \in \mathcal{G}$.

\bigskip

\noindent We thus have

$$|\mathcal{F}| = \sum_{E \in \mathcal{G}} |\mathcal{D}(E)|.$$

\bigskip

\noindent For a given $E \in \mathcal{G}$, we have $|\mathcal{D}(E)|=2^{n-\max(E)}$ so we obtain

$$w(\mathcal{F}) = \sum_{E \in \mathcal{G}} 2^{-\max(E)}.$$

\bigskip

\noindent Now, we split $\mathcal{G}$ into two parts: $\mathcal{G}_0 = \{A \in \mathcal{G} : n \in A\}$ and $\mathcal{G}_1 = \mathcal{G} \backslash \mathcal{G}_0$. If $\mathcal{G}_0$ is empty, we may consider $\mathcal{G}_1$ as a generating set for a family $\mathcal{F}' \subset \mathcal{P}(n-1)$, for which $\mathcal{F} = \mathcal{F}' \times \{0,1\}$. In this case, $w(\mathcal{F}) = w(\mathcal{F'})$.

\bigskip

\noindent We thus aim to transform $\mathcal{F}$ into a family in which $\mathcal{G}_0$ is empty, and we may do so by two means. The first is to simply remove some element $A$ of $\mathcal{G}_0$, which decreases $w(\mathcal{F})$ by $2^{-n}$. The second is to replace an element $A \in \mathcal{G}_0$ with $A' = A \backslash \{n\}$, and possibly remove some other elements of $\mathcal{G}$ that are supersets of $A'$. This does not remove any elements of $\mathcal{F}$ but it does add $A \backslash \{n\}$ (which was not previously in $\mathcal{F}$ as otherwise $A$ would not be minimal) so it increases $w(\mathcal{F})$ by at least $2^{-n}$. (In fact, this has the effect of adding $A \backslash \{n\}$ to $\mathcal{F}$ but leaving it otherwise unchanged, so it increases $w(\mathcal{F})$ by precisely $2^{-n}$). We refer to this operation as shortening $A$.

\bigskip

\noindent If every element of $\mathcal{G}_0$ is either removed or shortened, the resulting family will be of the form $\mathcal{F}' \times \{0,1\}$ for some family $\mathcal{F}' \subset \mathcal{P}(n-1)$l, so we seek to remove or shorten every element of $\mathcal{G}_0$. However, shortening may cause a violation of the intersection properties (as if $A$ is in $\mathcal{G}_0$, $B$ and $C$ are in $\mathcal{G}$, and $A' = A \backslash \{n\}$, it does not follow that $|A' \cap B| \geq 3$ or that $A' \cap B \cap C \neq \phi$) so we must avoid this situation when shortening.

\bigskip

\noindent Suppose that some $A \in \mathcal{G}_0$ cannot be shortened. Then there are two cases:

\bigskip

Case 1: $\exists B \in \mathcal{G}$ such that $|A' \cap B| < 3$.

Case 2: $\exists B, C \in \mathcal{G}$ such that $A' \cap B \cap C = \phi$.

\bigskip

\noindent We consider Case 1 first. We have $A' \cap B = (A \cap B) \backslash {n}$ so the only way this can occur is if $|A \cap B| = 3$ and $n \in A \cap B$. Suppose there was some $i \not \in A \cup B$. Then we consider $A^* = A \backslash \{n\} \cup \{i\}$. Since $A^* \prec A$, we have $A^* \in \mathcal{F}$ but $|A^* \cap B| = 2$, contradicting that $\mathcal{F}$ is $3$-intersecting. Thus $A \cup B = [n]$. We will refer to a pair $(A,B)$ of sets in $\mathcal{G}_0$ with $A \cup B = [n]$ and $|A \cap B| = 3$ as a sharp pair. If $i < j < n$ and $A \cap B = \{i,j,n\}$, we say $(A,B)$ is an $(i,j)$-sharp pair.

\bigskip

\noindent We now consider Case 2. Again, we have $A' \cap B \cap C = (A \cap B \cap C) \backslash \{n\}$ so we require $A \cap B \cap C = \{n\}$. Suppose there was some $i$ in at most one of $A$, $B$, and $C$. Then either $i \not \in B$ or $i \not \in C$, and w.l.o.g. we can assume the former. We consider $B^* = B \backslash \{n\} \cup \{i\}$, and again $B^* \prec B$ so $B^* \in \mathcal{F}$, and $A \cap B^* \cap C = \phi$, contradicting that $\mathcal{F}$ is $3$-wise intersecting. Thus every $i$ appears in exactly two of $A$, $B$, and $C$, except for $i=n$ which appears in all three. We will refer to a triple $(A,B,C)$ of sets in $\mathcal{G}_0$ with every element other than $n$ appearing in exactly two of $A$, $B$, and $C$ as a sharp triple.

\bigskip

\noindent We have thus shown that $A$ can be shortened unless it is part of a sharp pair or a sharp triple, so for $w(\mathcal{F})$ maximal we may assume that every element of $\mathcal{G}_0$ is in at least one sharp pair or triple. In fact, shortening and removing some elements will preserve the intersection properties provided that in every sharp pair or triple in which at least one element is shortened, we also have at least one element removed.

\bigskip

\noindent We have the following two lemmas, which together imply that either $w(\mathcal{F}) \leq W(n-1)$ or $\mathcal{F}$ is almost-trivial. This is helpful as it is much easier to analyse the maximal size of an almost-trivial family.

\begin{lemma}
\label{Exists-n-1-pair}
If there is no $(i,n-1)$-sharp pair in $\mathcal{G}_0$ for some $i < n-1$ then $w(\mathcal{F}) \leq W(n-1)$.
\end{lemma}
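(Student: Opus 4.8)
\bigskip

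\noindent\textbf{Proof strategy.} The plan is to shorten, all at once, a large subset of $\mathcal{G}_0 = \{A \in \mathcal{G} : n \in A\}$. If $\mathcal{G}_0 = \emptyset$ then $\mathcal{F} = \mathcal{F}' \times \{0,1\}$ for some $\mathcal{F}' \subseteq \mathcal{P}(n-1)$ in $\mathcal{A}_1(n-1) \cap \mathcal{B}_3(n-1)$, giving $w(\mathcal{F}) = w(\mathcal{F}') \leq W(n-1)$, so assume $\mathcal{G}_0 \neq \emptyset$. Let $\mathcal{F}'$ be the up-set of $\mathcal{P}(n-1)$ generated by $\mathcal{G}_1$; one checks $\mathcal{F}' = \mathcal{F} \cap \mathcal{P}(n-1)$, so $\mathcal{F}'$ is $3$-wise intersecting and $3$-intersecting, and since $\max(E) = n$ for $E \in \mathcal{G}_0$ the formula $w(\mathcal{F}) = \sum_{E \in \mathcal{G}} 2^{-\max(E)}$ gives $w(\mathcal{F}) = w(\mathcal{F}') + |\mathcal{G}_0| \cdot 2^{-n}$. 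I will then produce a subset $\mathcal{S} \subseteq \mathcal{G}_0$ with $|\mathcal{S}| \geq \tfrac12 |\mathcal{G}_0|$ for which $\mathcal{F}'' := \mathcal{F}' \cup \{A \setminus \{n\} : A \in \mathcal{S}\}$, regarded as a family in $\mathcal{P}(n-1)$, is still $3$-wise intersecting and $3$-intersecting. Since the sets $A \setminus \{n\}$ ($A \in \mathcal{S}$) are distinct and none lies in $\mathcal{F}'$ (each $A$ is minimal in $\mathcal{F}$), this gives $w(\mathcal{F}'') = w(\mathcal{F}') + |\mathcal{S}| \cdot 2^{-(n-1)} \geq w(\mathcal{F}') + |\mathcal{G}_0| \cdot 2^{-n} = w(\mathcal{F})$, and hence $w(\mathcal{F}) \leq w(\mathcal{F}'') \leq W(n-1)$.

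\medskip

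\noindent Next I claim that \emph{any} $\mathcal{S} \subseteq \mathcal{G}_0$ containing no sharp pair and no sharp triple in full does the job. Here I use two facts already extracted in the discussion above: for $A, B \in \mathcal{G}$ (necessarily in $\mathcal{G}_0$), $|(A \cap B) \setminus \{n\}| < 3$ forces $(A, B)$ to be a sharp pair; and for $A, B, C \in \mathcal{G}$ (necessarily in $\mathcal{G}_0$), $(A \cap B \cap C) \setminus \{n\} = \emptyset$ forces $(A, B, C)$ to be a sharp triple. (Applying the first with $B = A$ and using $n \geq 7$ shows that every element of $\mathcal{G}_0$ has size at least $4$, so shortening never creates a set of size below $3$.) Now the only pairs, resp.\ triples, of members of $\mathcal{F}''$ that need to be checked are those containing at least one shortened set $A \setminus \{n\}$; and since no member of $\mathcal{F}'$ contains $n$, deleting $n$ from sets of $\mathcal{F}$ appearing in such a pair or triple can drop the relevant intersection below the required size only if that intersection was $\{n\}$ together with exactly two other elements (for a pair) or $\{n\}$ alone (for a triple) --- which by the two facts means a sharp pair, resp.\ a sharp triple, all of whose members lie in $\mathcal{S}$, contrary to the choice of $\mathcal{S}$.

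\medskip

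\noindent The hypothesis is used exactly to guarantee such an $\mathcal{S}$ with $|\mathcal{S}| \geq \tfrac12 |\mathcal{G}_0|$. Partition $\mathcal{G}_0 = X \sqcup Y$, where $X = \{A \in \mathcal{G}_0 : n - 1 \in A\}$. Because there is no $(i, n-1)$-sharp pair, every sharp pair $(A, B)$ has $n - 1 \notin A \cap B$; since $n - 1 \in [n] = A \cup B$, exactly one of $A, B$ lies in $X$ and the other in $Y$. And in any sharp triple $(A, B, C)$ the element $n - 1$ lies in exactly two of the three, so exactly two of $A, B, C$ lie in $X$ and one in $Y$. Hence neither $X$ nor $Y$ contains a sharp pair or a sharp triple in full, and one of $X, Y$ has size at least $\tfrac12 |\mathcal{G}_0|$; take $\mathcal{S}$ to be that one.

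\medskip

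\noindent The step I expect to demand the most care is the verification in the second paragraph that shortening every set of $\mathcal{S}$ at once is harmless: one runs through the cases (two shortened sets; one shortened set together with one or two unshortened ones; three shortened sets) and confirms that the deleted element $n$ is always irrelevant unless it witnesses a sharp pair or a sharp triple entirely within $\mathcal{S}$. This is a routine extension of the single-shortening analysis already in the text, but it is the technical heart of the argument; with it in hand, the bipartition step is short.
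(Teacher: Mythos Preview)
Your proof is correct and follows essentially the same approach as the paper: your partition $\mathcal{G}_0 = X \sqcup Y$ according to membership of $n-1$ is exactly the paper's split $\mathcal{G}_0 = \mathcal{G}_+ \sqcup \mathcal{G}_-$, and shortening the larger part while discarding the smaller is precisely what the paper does. Your criterion ``$\mathcal{S}$ contains no sharp pair or sharp triple in full'' is equivalent to the paper's ``every sharp pair or triple with a shortened element also has a removed element''; the only cosmetic difference is that you build $\mathcal{F}''$ directly in $\mathcal{P}(n-1)$, whereas the paper first produces a family in $\mathcal{P}(n)$ and then observes it has the form $\mathcal{F}'' \times \{0,1\}$.
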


\begin{proof}

Assume there is no $(i,n-1)$-sharp pair. We consider the sharp pairs and triples in $\mathcal{G}_0$. Every sharp triple contains exactly two sets containing $n-1$ and one not containing $n-1$. In a sharp pair $(A,B)$, we know that $(A,B)$ is not $(i,n-1)$-sharp so $n-1$ is not in $A \cap B$. Thus exactly one of $A$ and $B$ contains $n-1$ and one does not.

\bigskip

\noindent We can now partition $\mathcal{G}_0$ into $\mathcal{G}_+$ and $\mathcal{G}_-$, with $\mathcal{G}_+$ consisting of those sets in $\mathcal{G}_0$ containing $n-1$ and $\mathcal{G}_-$ those not containing $n-1$. Then we may shorten all elements of whichever of $\mathcal{G}_+$ and $\mathcal{G}_-$ is larger (or either if they are the same size) and remove all elements of the other, producing a generating set $\mathcal{G}'$ for a family $\mathcal{F}'$. Since every sharp pair and triple contains both an element of $\mathcal{G}_+$ and an element of $\mathcal{G}_-$, this will preserve the intersection properties regardless of which is shortened and which is removed. Since we shorten at least as many sets as we remove, we have $w(\mathcal{F}') \geq w(\mathcal{F})$. Also, since every element of $\mathcal{G}_0$ was either removed or shortened, no element of $\mathcal{G}'$ contains $n$, so $\mathcal{G}'$ also generates a family $\mathcal{F}'' \subset \mathcal{P}(n-1)$ with $\mathcal{F}' = \mathcal{F}'' \times \{0,1\}$. Thus $w(\mathcal{F}') \leq W(n-1)$ so $w(\mathcal{F}) \leq W(n-1)$.

\end{proof}

\begin{lemma}
\label{Almost-Trivial}
If there is an $(i,n-1)$-sharp pair in $\mathcal{G}_0$ for some $i < n-1$ then $\mathcal{F}$ is almost-trivial.
\end{lemma}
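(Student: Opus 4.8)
The plan is to argue by contradiction. Suppose $\mathcal{F}$ is not almost-trivial; then $\mathcal{F}$ contains a set of size at most $n-3$ avoiding the element $1$. Let $m$ be the minimum size of a member of $\mathcal{F}$ not containing $1$, so that $m\le n-3$ and every set in $\mathcal{F}$ missing $1$ has size at least $m$. First I would normalise: if $E_1\in\mathcal{F}$ has $|E_1|=m$ and $1\notin E_1$, then every element of $E_1$ is at least $2$, so the $k$th smallest element of $E_1$ is at least $k+1$; hence $\{2,3,\dots,m+1\}\preceq E_1$, and since $\mathcal{F}$ is left-compressed the canonical set $E:=\{2,3,\dots,m+1\}$ lies in $\mathcal{F}$. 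The point of passing to this particular $E$ is that $\max(E)=m+1\le n-2$, so $E$ avoids both $n-1$ and $n$.

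Next I would bring in the $(i,n-1)$-sharp pair $(A,B)$, for which $A,B\in\mathcal{F}$, $A\cap B=\{i,n-1,n\}$ and $A\cup B=[n]$. Applying the $3$-wise intersecting property to $E,A,B$ gives $E\cap A\cap B\neq\emptyset$; but $E\cap A\cap B\subseteq E\cap\{i,n-1,n\}$ and $E$ contains neither $n-1$ nor $n$, so this forces $i\in E$, i.e. $2\le i\le m+1$. This is the crucial leverage that the sharp pair supplies: it traps $i$ inside the short interval $\{1,\dots,m+1\}$.

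Now I would delete precisely $i$: set $S=\{1,2,\dots,m+1\}\setminus\{i\}$. Since $2\le i\le m+1$, both $S$ and $E$ are $m$-element subsets of $\{1,\dots,m+1\}$, and a one-line coordinatewise check gives $S\prec E$; hence $S\in\mathcal{F}$, so there is $C\in\mathcal{G}$ with $C\subseteq S$. If $1\notin C$ then $C\subseteq S\setminus\{1\}$, a set of size $m-1$, contradicting the minimality of $m$; so $1\in C$. But $C\subseteq S$ gives $i\notin C$, and $C\subseteq\{1,\dots,m+1\}$ with $m+1\le n-2$ gives $n-1,n\notin C$, so $C\cap\{i,n-1,n\}=\emptyset$, i.e. $C\cap A\cap B=\emptyset$. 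This contradicts the $3$-wise intersecting property applied to $C,A,B\in\mathcal{F}$, and the contradiction proves the lemma.

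There is no lengthy computation involved; the entire content is in two observations, and these are where I expect the only real difficulty to lie: (a) recognising that the hypothetical small non-trivial set should first be normalised to the interval $\{2,\dots,m+1\}$, which is exactly what makes it dodge the two top coordinates $n-1,n$ of $A\cap B$; and (b) then using the sharp pair to confine $i$ to $\{1,\dots,m+1\}$ and deleting exactly $i$, which produces a minimal set containing $1$ that nonetheless misses all of $A\cap B$. Everything else is routine bookkeeping with $\prec$ and the minimality of $m$.
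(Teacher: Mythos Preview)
Your argument is correct. Both your proof and the paper's hinge on left-compression to force an empty triple intersection with $A\cap B=\{i,n-1,n\}$, but you compress different objects. The paper leaves the hypothetical bad set $C$ essentially alone (after ensuring $n\notin C$) and instead compresses the sharp pair itself, replacing $A,B$ by $A'=\mathcal{C}_{j(n-1)}(\mathcal{C}_{1i}(A))$ and $B'=\mathcal{C}_{j(n-1)}(\mathcal{C}_{1i}(B))$ for a suitable $j\notin C$, so that $A'\cap B'=\{1,j,n\}$ misses $C$ outright. You do the opposite: you keep $A,B$ fixed, normalise the bad set all the way down to the interval $E=\{2,\dots,m+1\}$, use the triple $E,A,B$ to trap $i\in[2,m+1]$, and then swap $i$ for $1$ and invoke minimality of $m$ to find a generator $C$ missing $\{i,n-1,n\}$. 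Your route introduces the extra bookkeeping of the parameter $m$ and the passage to a minimal $C\in\mathcal{G}$, but in exchange it avoids having to track what the two compressions $\mathcal{C}_{1i}$ and $\mathcal{C}_{j(n-1)}$ do to $A\cap B$ (which in the paper's version needs a moment's care, since one of $A,B$ may already contain $1$ or $j$). Both arguments are short; yours is arguably a bit more self-contained.
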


\begin{proof}

We have that there is an $(i,n-1)$-sharp pair, for some $i < n-1$. Let this sharp pair be $(A,B)$. Suppose there was $C \in \mathcal{F}$ with $|C| \leq n-3$ and $1 \not\in C$. Then there is also some other $j \not\in C$ with $1 < j \leq n-1$, so if $n \in C$ we can take a compression to obtain $C' = \mathcal{C}_{jn}(C)$ which does not contain $n$ (and is in $\mathcal{F}$ since it is left-compressed), so we may assume w.l.o.g. $n \not\in C$. There is still a (possibly different) $j \not\in C$ with $1 < j \leq n-1$, since $|C| \leq n-3$. We now take $A' = \mathcal{C}_{j(n-1)}(\mathcal{C}_{1i}(A))$ and $B' = \mathcal{C}_{j(n-1)}(\mathcal{C}_{1i}(B))$. Since $\mathcal{F}$ is left-compressed, both $A'$ and $B'$ are in $\mathcal{F}$. However, $A' \cap B' = \{1,j,n\}$ so $A' \cap B' \cap C' = \phi$, contradicting that $\mathcal{F}$ is $3$-wise intersecting. Thus there is no such $C$, so $\mathcal{F}$ is almost-trivial.

\end{proof}

\noindent The next lemma concerns the maximal size of an almost-trivial family.

\begin{lemma}
\label{Max-size-for-almost-trivial}
Let $\mathcal{F}$ be almost trivial.

If $n$ is odd, then $w(\mathcal{F}) \leq w(\mathcal{F}_n)$ with equality iff $\mathcal{F} = \mathcal{F}_n$.

If $n$ is even, then $w(\mathcal{F}) < w(\mathcal{F}_{n-1})$.
\end{lemma}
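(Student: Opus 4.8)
The plan is to directly analyze the generating set $\mathcal{G}$ of an almost-trivial left-compressed up-set $\mathcal{F}$ and optimize $w(\mathcal{F}) = \sum_{E \in \mathcal{G}} 2^{-\max(E)}$. Since $\mathcal{F}$ is almost-trivial, every $E \in \mathcal{G}$ with $|E| \leq n-3$ satisfies $1 \in E$; the only elements of $\mathcal{G}$ not containing $1$ have size $n-2$, $n-1$, or $n$. But an up-set with a generating element of size $n-1$ or $n$ that omits $1$ cannot be left-compressed unless smaller sets force it out, so after a short argument the ``non-trivial part'' of $\mathcal{G}$ consists only of sets of size exactly $n-2$ not containing $1$; by left-compression these are exactly the sets $[n] \setminus \{1, j\}$ for $j$ in some down-closed range, i.e. $2 \leq j \leq t$ for some threshold $t$ (or there are none). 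Write $\mathcal{G} = \mathcal{G}^1 \cup \mathcal{G}^*$ where $\mathcal{G}^1$ are the generators containing $1$ and $\mathcal{G}^*$ are these co-size-$2$ sets; the contribution of $\mathcal{G}^*$ is $(t-1)2^{-n}$ at most, a lower-order term.

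First I would handle $\mathcal{G}^1$. Deleting the element $1$ from every set of $\mathcal{G}^1$ gives a left-compressed generating set $\mathcal{H}$ for a family $\mathcal{F}' \subset \mathcal{P}(\{2,\dots,n\})$, and the intersection conditions on $\mathcal{F}$ translate: $\mathcal{F}$ being $3$-intersecting forces $\mathcal{F}'$ to be $2$-intersecting, and $\mathcal{F}$ being $3$-wise intersecting forces $\mathcal{F}'$ to be $3$-wise intersecting (the common intersection loses only the guaranteed element $1$). Crucially $w(\mathcal{G}^1\text{-part of }\mathcal{F}) = \tfrac12 w(\mathcal{F}')$ since each generator gained one forced coordinate. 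So the trivial part contributes at most $\tfrac12$ times the maximum weight of a family on $n-1$ points that is both $3$-wise intersecting and $2$-intersecting — and here I would invoke the known extremal result for $2$-wise $2$-intersecting combined with $3$-wise intersecting families, or more directly the Ahlswede--Khachatrian / Frankl-style bound: such a family on $m$ points has weight at most $\tfrac14 + O(m^{-1/2})$, with the precise extremal configuration being ``contain a fixed pair, plus all large sets.'' Feeding this back, together with the $(t-1)2^{-n}$ correction and optimizing the threshold $t$ against the size cutoff in $\mathcal{G}^1$, should reproduce exactly the formula for $w(\mathcal{F}_n)$ when $n$ is odd (the $\tfrac12\binom{n-1}{(n-1)/2}$ term being precisely the deficit of the optimal $2$-intersecting threshold family on $n-1$ points from weight $\tfrac12$), and a strictly smaller value when $n$ is even because the relevant binomial-tail optimum on an even number of points $n-1$ is off-center and loses strictly more.

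The main obstacle I expect is pinning down the \emph{exact} optimum rather than an asymptotic bound, and proving \emph{uniqueness}. The reduction above only shows $w(\mathcal{F}) \le \tfrac12 W'(n-1) + (t-1)2^{-n}$ for an appropriate $W'$, and to get equality conditions one must track how the threshold $t$ in $\mathcal{G}^*$ interacts with the minimal set size in $\mathcal{G}^1$: making $\mathcal{G}^*$ larger (bigger $t$) forces, via the $3$-wise intersecting condition applied across a co-size-$2$ set and two small trivial generators, the small generators in $\mathcal{G}^1$ to be larger, so there is a genuine trade-off to optimize. I would set this up as a one-variable discrete optimization, show the optimum is at $t$ as small as possible (so that $\mathcal{G}^*$ is empty or nearly so) for odd $n$, giving $\mathcal{F}_n$ uniquely, and show that for even $n$ no choice matches $w(\mathcal{F}_{n-1})$ because the central-binomial deficit jumps. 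The parity split is exactly the statement that $\binom{n-1}{\lfloor (n-1)/2\rfloor}/2^{n-1}$ is larger, relative to the next configuration, when $n-1$ is odd than when it is even; making that comparison rigorous, with the lower-order $n \cdot 2^{-n}$ terms included, is the delicate computational heart of the lemma.
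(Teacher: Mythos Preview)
Your decomposition into the trivial part $\mathcal{G}^1$ and a small correction $\mathcal{G}^*$ is the right first move, and you correctly identify that deleting $1$ from the trivial part produces a $2$-intersecting family on $\{2,\dots,n\}$. But the argument is far simpler than you anticipate, and your proposal contains one genuine error and one unnecessary complication.

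The error: after deleting $1$ from each set of the trivial part, the resulting family $\mathcal{F}'$ on $\{2,\dots,n\}$ is \emph{not} in general $3$-wise intersecting. Three sets in $\mathcal{F}$ whose common intersection is exactly $\{1\}$ become three sets in $\mathcal{F}'$ with empty intersection. Only the $2$-wise $2$-intersecting property survives (inherited from the original $2$-wise $3$-intersecting condition), and that alone is what gets used.

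The complication: the trade-off optimization you set up between the threshold $t$ and the size cutoff in $\mathcal{G}^1$ is phantom. The paper simply observes that at most $n$ sets of $\mathcal{F}$ fail to contain $1$ (the subsets of $\{2,\dots,n\}$ of size $\ge n-2$, of which there are $\binom{n-1}{n-2}+\binom{n-1}{n-1}=n$), discards them at a cost of at most $n\cdot 2^{-n}$ in weight, and applies Katona's theorem for $2$-intersecting families directly to $\mathcal{F}''$ on the $(n-1)$-element ground set. For $n$ odd Katona gives $|\mathcal{F}''|\le 2^{n-2}-\tfrac12\binom{n-1}{(n-1)/2}$ with unique extremum $[2,n]^{(\ge (n+1)/2)}$, yielding $w(\mathcal{F})\le w(\mathcal{F}_n)$ on the nose; equality forces both the Katona extremum and inclusion of all $n$ large sets missing $1$, i.e.\ exactly $\mathcal{F}_n$. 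For $n$ even Katona gives $|\mathcal{F}''|\le 2^{n-2}-\binom{n-2}{(n-2)/2}$, hence $w(\mathcal{F})\le w(\mathcal{F}_{n-1})-(n-2)2^{-n}<w(\mathcal{F}_{n-1})$.

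So the ``delicate computational heart'' you worry about does not exist: the $3$-wise intersecting hypothesis plays no role in the upper bound and is used only to confirm that $\mathcal{F}_n$ is an admissible family.
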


\begin{proof}

First, the only elements of $\mathcal{F}$ that could possibly fail to contain $1$ are those of size at least $n-2$, and there are at most $n$ of them. Thus we may remove them to form a trivial family $\mathcal{F}'$ with $w(\mathcal{F}') \geq w(\mathcal{F}) - n2^{-n}$. We may remove $1$ from each set in $\mathcal{F}'$ to form a family $\mathcal{F}''$ of subsets of $[2,n]$, with $|\mathcal{F}'|=|\mathcal{F}''|$. Since $\mathcal{F}'$ is $3$-intersecting, $\mathcal{F}''$ is $2$-intersecting.

\bigskip

\noindent As proven in \cite{katona}, if $n$ is odd, say $n = 2l+1$, then $|\mathcal{F}''| \leq 2^{n-2} - \frac{1}{2} \binom{n-1}{l}$ with equality iff $\mathcal{F}'' = [2,n]^{(\geq l+1)}$. In this case we get precisely that $w(\mathcal{F}) \leq w(\mathcal{F}_n)$, and the equality case we get is precisely $\mathcal{F}_n$.

\bigskip

\noindent Also, from \cite{katona}, if $n$ is even, say $n = 2l+2$, then $|\mathcal{F}''| \leq 2^{n-2} - \binom{n-2}{l}$. In this case we get that $w(\mathcal{F}) \leq \frac{1}{4} + 2^{-n}(-\binom{n-2}{l}+n) = w(\mathcal{F}_{n-1}) - (n-2)2^{-n}$. The equality case we obtain is $\mathcal{F} = (\mathcal{F}_{n-1} \times \{0,1\}) \backslash \{A \subset [n] : 1,n \not\in A, |A| = n-3\}$. Thus, as some elements are excluded, we have $w(\mathcal{F}) < w(\mathcal{F}_{n-1})$.

\end{proof}

\noindent We can now prove Theorem \ref{theorem:Main_Thm} by induction. For $n=7$, we know that $\mathcal{F}$ is $3$-intersecting so, again from \cite{katona}, the $3$-intersecting family of subsets of $[7]$ of maximal size is in fact $\mathcal{F}_7$, and this is unique. Since $\mathcal{F}_7$ is also $3$-wise intersecting, it is the unique maximal family so $W(7) = w(\mathcal{F}_7)$.

\bigskip

\noindent Assume Theorem \ref{theorem:Main_Thm} holds for $n-1$. By Lemmas \ref{Exists-n-1-pair} and \ref{Almost-Trivial}, either $w(\mathcal{F}) \leq W(n-1)$ or $\mathcal{F}$ is almost-trivial. If $n$ is even then, by Lemma \ref{Max-size-for-almost-trivial}, if $\mathcal{F}$ is almost-trivial then $w(\mathcal{F}) < w(\mathcal{F}_{n-1})$, so $\mathcal{F}$ is not optimal. Thus the former case must hold so $W(n) = W(n-1)$.

\bigskip

\noindent If $n$ is odd then we must have either $w(\mathcal{F}) \leq W(n-1)$ or $w(\mathcal{F}) \leq w(\mathcal{F}_n)$. Thus we have $W(n) = \max(W(n-1),w(\mathcal{F}_n))$. Since we assume Theorem \ref{theorem:Main_Thm} holds for $n-1$, we have that $W(n) = W(n-1)$ for $n \leq 71$, since $W(n-1) = w(\mathcal{F}_7) \geq w(\mathcal{F}_n)$. However, for $n \geq 73$, we instead have $W(n) = w(\mathcal{F}_n)$ and in this case the unique maximal left-compressed up-set is $\mathcal{F}_n$. Thus, by induction, we have that the value of $W(n)$ is precisely as stated in Theorem \ref{theorem:Main_Thm} for all $n \geq 7$. This suffices to prove Conjecture \ref{conj:frankl-kupavskii}.

\section{Uniqueness of left-compressed families in Theorem \ref{theorem:Main_Thm}}

\bigskip

\noindent It remains to demonstrate uniqueness of the left-compressed families $\mathcal{F}$ satisfying $w(\mathcal{F}) = W(n)$ in the cases $n \leq 72$ or $n$ even. In this case, we know that $\mathcal{F}$ cannot be almost-trivial so must not have an $(i,n-1)$-sharp pair. We can then partition $\mathcal{G}_0$ into $\mathcal{G}_+$ and $\mathcal{G}_-$ as in the proof of Lemma \ref{Exists-n-1-pair}, and may shorten one of these and remove the other. If $|\mathcal{G}_+| \neq |\mathcal{G}_-|$ then doing so will strictly increase $w$, which is impossible since $\mathcal{F}$ is maximal. Thus $|\mathcal{G}_+| = |\mathcal{G}_-|$ and so we may shorten either one of $\mathcal{G}_+$ and $\mathcal{G}_-$ and remove the other without changing the value of $w$.

\bigskip

\noindent If we shorten $|\mathcal{G}_-|$, we remove from $\mathcal{F}$ those elements of $\mathcal{G}$ containing both $n$ and $n-1$, and add elements $A \backslash \{n\}$ for all $A \in \mathcal{G}$ containing $n$ but not $n-1$. Suppose that shortening produces the family $\mathcal{F}'$ and we have $A \in \mathcal{F}'$, $B \not\in \mathcal{F}'$ with $B \prec A$. Since $\mathcal{F}$ is left-compressed, either $B$ has been removed from $\mathcal{F}$ or $A$ has been added. If $B$ was removed, since $B \prec A$, we must have $A$ contains $n$ and $n-1$ and thus was not added. If $A \in \mathcal{G}$ it would also have been removed, so $A \not\in \mathcal{G}$. Then there is an $A' \subset A \in \mathcal{F}$ with $|A'| < |A|$, and a corresponding subset $B' \subset B$ consisting of the initial $|A'|$ elements of $B$. Since $B \prec A$, we have $B' \prec A'$ so $B' \in \mathcal{F}$, contradicting $B \in \mathcal{G}$. Otherwise, if $A$ was added, we must have $A' = A \cup \{n\} \in \mathcal{G}$ and $n-1 \not\in A$. Then we have $B' = B \cup \{n\} \prec A'$ so $B' \in \mathcal{F}$. If $B' \not\in \mathcal{G}$ then there is a proper subset $B'' \subset B'$ in $\mathcal{F}$ with $|B''| = |A|$, and then $B \prec B'$ contradicting $B \not\in \mathcal{F}'$. Thus $B' \in \mathcal{G}$, in which case we must have $n-1 \not\in B'$ so $B'$ is also shortened, once again contradicting $B \not\in \mathcal{F}'$. Thus, there can be no such $A$ and $B$, so $\mathcal{F}'$ is left-compressed.

\bigskip

\noindent In this case, by the inductive hypothesis, we conclude that $\mathcal{F}'$ is an extension of the unique maximal left-compressed family for $n-1$.

\bigskip


\noindent In the case $n \leq 72$, the family $\mathcal{F}'$ must be an extension of $\mathcal{F}_7$. Thus for every $A \subset [7]$ with $|A|=5$, exactly one of $A$ and $A \cup \{n\}$ is in $\mathcal{G}$, and these are the only elements of $\mathcal{G}_-$. If $n \geq 9$, for every $B \subset [7]$ with $|B| = 3$ we can choose $A_1, A_2 \subset [7]$ with $|A_1|=|A_2|=5$ and $A_1 \cap A_2 = B$. If both correspond to sets in $\mathcal{G}_-$ containing $n$, we can perform a left-compression to one of them to replace the $n$ by $n-1$, to get two sets in $\mathcal{F}$ whose intersection is $B$. Then, for any $C \in \mathcal{F}$, if $|C \cap [7]| \leq 4$, then we can choose $B \subset [7]$ with $|B| = 3$ and $B \cap C = \phi$, contradicting that $\mathcal{F}$ is $3$-wise intersecting. Thus, for all $C \in \mathcal{F}$, we have $|C \cap [7]| \geq 4$ so we have $\mathcal{F} \subset \mathcal{F}_7 \times \{0,1\}^{n-7}$. Thus this is the unique left-compressed family $\mathcal{F}$ with $w(\mathcal{F}) = W(n)$. For $n=8$, again from \cite{katona} we have that this is the unique left-compressed extremal family for $3$-intersection, and this is also $3$-wise intersecting and thus is again the unique left-compressed family $\mathcal{F}$ with $w(\mathcal{F}) = W(n)$.

\bigskip

\noindent The final remaining case is $n \geq 74$ even. Again, as before, $\mathcal{F}'$ must be an extension of the unique maximal left-compressed family for $n-1$. By the inductive hypothesis, $\mathcal{F}'$ is an extension of $\mathcal{F}_{n-1}$. Thus, for $A \subset [n-1]$ with $|A| = \frac{n+2}{2}$ and $1 \in A$, either $A$ or $A \cup \{n\}$ is in $\mathcal{G}$, and if $A \cup \{n\}$ is in $\mathcal{G}$ it is in $\mathcal{G}_-$. In particular, if $n-1 \in A$ then $A \cup \{n\}$ cannot be in $\mathcal{G}_-$ so $A$ must be in $\mathcal{G}$. Now suppose there was some $B \in \mathcal{G}_+$ with $|B| \leq \frac{n+2}{2}$. Then we can choose $A \subset [n-1]$ with $|A| = \frac{n+2}{2}$ and $1 \in A$ such that $|A \cap B| = 2$ (if $1 \in B$ we must have $A \cap B = \{1,n-1\}$, otherwise it contains $n-1$ and some other element). Thus every element $B$ of $\mathcal{G}_+$ has $|B| \geq \frac{n+4}{2}$. If $1 \in B$ then $B \backslash \{n\}$ contains $1$ and has size at least $\frac{n+2}{2}$ so it is a superset of a set in $\mathcal{G}$, contradicting $B \in \mathcal{G}$. Thus, for all $B \in \mathcal{G}_+$, we must also have that $1 \not\in B$.

\bigskip

\noindent However, we can take two different subsets of $[n-1]$ of size $\frac{n+2}{2}$ containing $1$ and $n-1$, which are thus both in $\mathcal{G}$, and whose intersection is precisely $\{1,n-2,n-1\}$, so every set in $\mathcal{F}$ must intersect this set. For $B \in \mathcal{G}_+$, if $|B| \leq n-3$, we can perform left-compressions to obtain some $B' \in \mathcal{F}$ with $1,n-2,n-1 \not\in B'$, thus contradicting that $\mathcal{F}$ is $3$-wise intersecting. Hence, for all $B \in \mathcal{G}_+$, we have $|B| \geq n-2$. But we also have in $\mathcal{F}'$ all sets that do not contain $1$ and have size $n-3$, and all such $B$ are supersets of some such set. However, the only set that could have arisen in this manner from shortening a set in $\mathcal{G}_-$ is $[2,n-2]$ (since sets in $\mathcal{G}_-$ must exclude $n-1$) and so any $B$ may only be a superset of this set and no other set of size $n-3$ not containing $1$. This is impossible as all such $B$ contain $n-1$. Thus there are no such $B$ so $\mathcal{G}_+$ is empty, and since $|\mathcal{G}_+| = |\mathcal{G}_-|$ we have $\mathcal{G}_-$ is also empty and so $\mathcal{F} = \mathcal{F}'$, completing the proof of Theorem \ref{theorem:Main_Thm}.

\section{Concluding remarks and open problems}

In \cite{frankl-kupavskii}, Frankl and Kupavskii define monotone properties $\Pi_1$ and $\Pi_2$ to be incompatible if

$$
\liminf_{n \to \infty} \left(\max_{\mathcal{F}_1 \subset \mathcal{P}(n) \textrm{ has } \Pi_1} w(\mathcal{F}_1)\right)\left(\max_{\mathcal{F}_2 \subset \mathcal{P}(n) \textrm{ has } \Pi_2} w(\mathcal{F}_2)\right)
$$

$$
=\liminf_{n \to \infty} \left(\max_{\mathcal{F} \subset \mathcal{P}(n) \textrm{ has } \Pi_1,\Pi_2} w(\mathcal{F})\right)
$$

\bigskip

\noindent By the Harris-Kleitman inequality, the latter is always at least the former, so $\Pi_1$ and $\Pi_2$ are incompatible if equality holds in the limit as $n \to \infty$ in the Harris-Kleitman inequality.

\bigskip

\noindent Conjecture \ref{conj:frankl-kupavskii} is equivalent to the properties of being $3$-wise intersecting and $3$-intersecting being incompatible. In \cite{frankl-kupavskii}, Frankl and Kupavskii further conjecture the following.

\begin{conj}
\label{conj:frankl-kupavskii-2}
For every integer $s \geq 1$ there exists an integer $t_0(s)$ such that for all $t \geq t_0(s)$, the properties of being $3$-wise $s$-intersecting and $t$-intersecting are incompatible.
\end{conj}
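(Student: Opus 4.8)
\noindent The plan is to lift the proof of Theorem~\ref{theorem:Main_Thm} from the parameters $(r_1,k_1,r_2,k_2)=(3,1,2,3)$ to $(3,s,2,t)$ with $t$ large in terms of $s$. Write $a_s(n)$ for the maximum weight of a $3$-wise $s$-intersecting family in $\mathcal{P}(n)$ and $b_t(n)$ for the maximum weight of a $t$-intersecting family; the $\times\{0,1\}$ operation shows both are non-decreasing in $n$, with limits $a_s:=\lim_n a_s(n)$ and $b_t:=\lim_n b_t(n)=\tfrac12$ (the last by the remark on $\mathcal{B}_{k_2}(n)$ in the introduction, valid since $t\ge 3$). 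Since $W_{s,t}(n)$ is also non-decreasing, the Frankl--Kupavskii notion of incompatibility for the properties ``$3$-wise $s$-intersecting'' and ``$t$-intersecting'' unwinds to the single inequality
$$W_{s,t}(n)\ \le\ \tfrac12\,a_s\qquad\text{for all }n,$$
the reverse inequality $\lim_n W_{s,t}(n)\ge\tfrac12 a_s$ being automatic from Harris--Kleitman applied to a maximal $3$-wise $s$-intersecting up-set and a maximal $t$-intersecting up-set. So the goal is this upper bound for $t\ge t_0(s)$; as in Theorem~\ref{theorem:Main_Thm} one should expect the extremal families to be, up to compressions, the product of a maximal $3$-wise $s$-intersecting family on a bounded ground set and a maximal $t'$-intersecting family on the remaining coordinates.

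\medskip

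\noindent The argument would again be an induction on $n$ through the generating set $\mathcal{G}$ of a left-compressed extremal up-set $\mathcal{F}$ and the identity $w(\mathcal{F})=\sum_{E\in\mathcal{G}}2^{-\max(E)}$. As in Section~2, one tries to remove or shorten every element of $\mathcal{G}_0=\{E\in\mathcal{G}:n\in E\}$, which realises $\mathcal{F}$ with no loss of $w$ as $\mathcal{F}''\times\{0,1\}$ and gives $W_{s,t}(n)=W_{s,t}(n-1)$. The obstructions are ``sharp'' configurations, as in Section~2: bounded sub-families of $\mathcal{G}_0$ whose union is $[n]$ and whose relevant common intersection has size exactly $t$ and contains $n$ (from the $t$-intersecting condition), or size exactly $s$ and contains $n$ (from the $3$-wise $s$-intersecting condition). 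The analogue of Lemma~\ref{Exists-n-1-pair} should say that if no such configuration uses the coordinate $n-1$ maximally then shortenings can be balanced against removals and the reduction goes through, while the analogue of Lemma~\ref{Almost-Trivial} should say that otherwise $\mathcal{F}$ is forced to be \emph{almost a junta}: every set of $\mathcal{F}$ of size $\le n-O_{s,t}(1)$ has a prescribed trace on some fixed $T$ with $|T|=O_{s,t}(1)$ making the restriction to $T$ one of the Frankl families $\mathcal{H}_j=\{A:|A\cap[s+3j]|\ge s+2j\}$, with $\mathcal{F}$ restricted to $[n]\setminus T$ being $(t-s-j)$-intersecting. The analogue of Lemma~\ref{Max-size-for-almost-trivial} then bounds $w$ of such a family: one discards the at most $\mathrm{poly}_{s,t}(n)$ sets of size $>n-O_{s,t}(1)$ (an exact count, just as the definition of $\mathcal{F}_n$ folds in the up-to-$n$ exceptional large sets), leaving a product of a $3$-wise $s$-intersecting family on $T$ — of weight $\le a_s$ by definition of $a_s$ — with a $(t-s-j)$-intersecting family on $[n]\setminus T$, whose weight Ahlswede--Khachatrian evaluates exactly and which falls short of $\tfrac12$ by $\Theta_{s,t}(n^{-1/2})$; the discarded sets cost only $\mathrm{poly}_{s,t}(n)\cdot 2^{-n}$, absorbed by this polynomial slack, so $w(\mathcal{F})<\tfrac12 a_s$ — provided $t\ge t_0(s)$ is chosen large enough that $t-s-j\ge 1$ for every $j$ with $w(\mathcal{H}_j)\ge\tfrac12 a_s$ (and $w(\mathcal{H}_j)<\tfrac12 a_s$ holds automatically once $j\ge t-s$, since $w(\mathcal{H}_j)\to 0$). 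With the finitely many small $n$ handled directly as base cases, the induction then yields $W_{s,t}(n)\le\tfrac12 a_s$ for all $n$.

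\medskip

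\noindent The main obstacle is the almost-junta step, i.e. the analogue of Lemma~\ref{Almost-Trivial}. For $s=1$ the only configuration obstructing reduction is an $(i,n-1)$-sharp pair, and a single left-compression against a putative small set avoiding it instantly produces a $3$-wise intersection violation, so $\mathcal{F}$ is almost-trivial in one line. For general $s$ there is a whole hierarchy of candidate extremal $3$-wise $s$-intersecting families — the Frankl families $\mathcal{H}_j$, whose extremality is itself the subject of Frankl-type conjectures on $r$-wise $t$-intersecting families and is only partially settled — and a correspondingly rich collection of sharp configurations, including ones that appear a priori to interpolate between two different $\mathcal{H}_j$'s. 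One must rule these out and show that any obstruction pins $\mathcal{F}$ to a \emph{single} Frankl junta. It is exactly here that $t\ge t_0(s)$ is used: the strong $t$-intersecting hypothesis is what destroys the large-coordinate freedom that would otherwise permit such hybrid configurations, and making this precise seems to require invoking the full $r$-wise $t$-intersection theorem, together with a stability argument, in place of the elementary compression that closes the $s=1$ case.
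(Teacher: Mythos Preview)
This statement is Conjecture~\ref{conj:frankl-kupavskii-2}, which the paper does \emph{not} prove: the paper states explicitly that it ``remains open for all other values of $s$'' and that ``the methods in this paper are not sufficient'' to extend to $s>1$. So there is no ``paper's own proof'' to compare against; what you have written is a research outline for an open problem, and you essentially acknowledge this in your final paragraph.

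The concrete gap is exactly where the paper says it is, and it hits earlier in your outline than you suggest. You write that ``the analogue of Lemma~\ref{Exists-n-1-pair} should say that if no such configuration uses the coordinate $n-1$ maximally then shortenings can be balanced against removals.'' But the paper's Section~4 explains precisely why this fails for $s>1$: the analogue of a sharp triple is a triple $A,B,C\in\mathcal{G}_0$ with $|A\cap B\cap C|=s$ and every element of $[n]$ in at least two of the sets, and for $s>1$ it is possible that \emph{all three} of $A,B,C$ contain $n-1$. In that case the partition $\mathcal{G}_0=\mathcal{G}_+\cup\mathcal{G}_-$ by membership of $n-1$ no longer separates every sharp configuration, so shortening $\mathcal{G}_+$ and removing $\mathcal{G}_-$ (or vice versa) can break the $3$-wise $s$-intersecting property. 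Your balancing argument therefore has no analogue, and the reduction $W_{s,t}(n)\le W_{s,t}(n-1)$ does not go through. The paper also notes that the analogue of Lemma~\ref{Almost-Trivial} fails once $t>s+2$, which is exactly the regime you need.

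Your final paragraph locates the difficulty in the almost-junta step and in the unresolved structure of extremal $3$-wise $s$-intersecting families; that is a genuine further obstacle, but the argument has already broken at the shortening/removing dichotomy, before any junta analysis begins. As written, the proposal is a reasonable sketch of what a proof might look like, but the two steps you label ``should say'' are the whole problem, and neither the paper nor your outline supplies them.
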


\noindent The $s=1$ case of this conjecture was proved by Frankl and Kupavskii \cite{frankl-kupavskii}, and Theorem \ref{theorem:Main_Thm} implies that $t_0(1) = 3$. However, this conjecture remains open for all other values of $s$.

\bigskip

\noindent We may attempt to analyse the behaviour of $W_{k_1,k_2}(n)$ for $k_2 > k_1 > 1$ to attempt to prove this conjecture for $k_1 = s > 1$. However, the methods in this paper are not sufficient to do so, as the proof of the analogue of Lemma \ref{Exists-n-1-pair} fails for $k_1 > 1$. In this case, the analogue of a sharp triple is a triple $A,B,C$ of sets in $\mathcal{G}_0$ in which $|A \cap B \cap C| = k_1$ and all elements of $[n]$ are contained in at least two of $A$, $B$, and $C$. Unlike in the case $k_1=1$, if $k_1 > 1$ it is possible for all three of these to contain $n-1$, in which case it is not possible to shorten $\mathcal{G}_+$ and remove $\mathcal{G}_-$. Likewise, if $k_2 > k_1 + 2$, the proof of the analogue of Lemma \ref{Almost-Trivial} also fails.

\section*{Acknowledgement}

The author would like to thank Professor B\'{e}la Bollob\'{a}s for his valuable input.

\bigskip

\noindent The author is supported by EPSRC (Engineering and Physical Sciences Research Council).

\end{document}